\newtheorem{theorem}{\bf Theorem}
\newtheorem{proposition}[theorem]{\bf Proposition}
\newtheorem*{Theorem}{\bf Main Theorem}
\newtheorem{lemma}[theorem]{\bf Lemma}
\def\C{{\mathbb C}}
\def\N{{\mathbb N}}
\def\P{\mathbb{P}}
\def\bif{\textup{bif}}
\def\and{{\quad\text{and}\quad}}
\title{Parametric Lyapunov exponents}
\author{Henry De Th\'elin}
\address{Université Sorbonne Paris Nord, LAGA, CNRS, UMR 7539, F-93430, Villetaneuse, France}
\email{dethelin@math.univ-paris13.fr}
\author{Thomas Gauthier}
\address{CMLS UMR 7640, Ecole Polytechnique, Institut Polytechnique de Paris, 91128 Palaiseau Cedex, France}
\email{thomas.gauthier@polytechnique.edu}
\author{Gabriel Vigny}
\address{LAMFA UMR 7352, Universit\'e de Picardie Jules Verne, 33 rue Saint-Leu, 80039 Amiens Cedex 1, France}
\email{gabriel.vigny@u-picardie.fr}
\thanks{{Keywords.} Families of rational maps, bifurcation currents and measure, parametric Lyapunov exponents}
\thanks{{Mathematics~Subject~Classification~(2010):}
28D20, 37F45, 37F10.}
\thanks{The second and third authors are partially supported by the ANR grant Fatou ANR-17-CE40-0002-01.}
\begin{document}

\begin{abstract} In an algebraic family of rational maps of $\P^1$, we show that, for almost every parameter for the trace of the bifurcation current of a marked critical value, the critical value is Collet-Eckmann. This extends previous results of  Graczyk and \'{S}wi\c{a}tek in the unicritical family, using Makarov theorem. Our methods are based instead on ideas of laminar currents theory.
\end{abstract}

\maketitle

\section{Introduction}

Let $\Lambda$ be a smooth complex quasi-projective variety and $f:\Lambda\times \P^1 \to \Lambda\times \P^1$ an \emph{algebraic family} of rational maps of degree $d\geq 2$: $f$ is a morphism and for each $(\lambda,z)$, $f(\lambda,z)=(\lambda,f_\lambda(z))$ where $f_\lambda$ is a rational map of $\P^1$ of degree $d$. Let also $a$  be a \emph{marked point}, i.e. a rational function $a:\Lambda\to\P^1$. A particularly interesting case is when $a$ is a marked critical point. A fundamental notion in complex dynamics is the notion of \emph{stability}: the point $a$ is stable at some parameter $\lambda_0$ if the sequence $\lambda \mapsto (f^n_\lambda(a(\lambda)))_n$ is normal in some neighborhood of $\lambda_0$. The \emph{bifurcation locus} of $a$ is then the set of unstable parameters. 

One can give a measurable sense to bifurcation using the \emph{bifurcation (or activity) current} of the pair $(f,a)$. It is the closed positive $(1,1)$-current $T_{f,a}:=(\Pi_\Lambda)_*(\widehat{T}\wedge[\Gamma_a])$,
where $\widehat{T}$ is the fibered Green current of the family $f$, $\Gamma_a$ is the graph of $a$ and $\Pi_\Lambda:\Lambda\times\P^1\to\Lambda$ is the canonical projection. This current is supported by the bifurcation locus of the marked point $a$,  see e.g. \cite{favredujardin}. When $\dim(\Lambda)=1$, then $T_{f,a}$ is a measure that we simply denote $\mu_{f,a}$. 

\bigskip
In some sense, the bifurcation current is a parametric analogous of the Green current of an endomorphism of $\P^k$ which measures the dynamical unstability. As such, it is interesting to develop an ergodic theory for the bifurcation currents. This is what we did in \cite{bif-entropy} where we defined a notion of parametric entropy and proved, e.g.,  that in a one dimensional family, the measure $\mu_{f,a}$ is a measure of maximal entropy. Pursuing our study, in the present article, we address the notion of parametric Lyapunov exponent. 

An historically important example is the Mandelbrot set in the unicritical family: $f_\lambda(z)=z^d+\lambda$ with $\lambda\in\C$ and $a(\lambda):=\lambda$. In this case, the bifurcation measure $\mu_{f,a}$ is the equilibrium measure (or equivalently the harmonic measure) $\mu_{\mathrm{M}_d}$ of the  degree $d$ Mandelbrot set $\mathrm{M}_d$. In this context,
Graczyk and \'Swi\c{a}tek \cite{GS-Mand} described the dynamics of a typical parameter:

\begin{theorem}[Graczyk-\'Swi\c{a}tek]
In the unicritical family of degree $d$, for $\mu_{\mathrm{M}_d}$-almost every parameter $\lambda\in\C$, we have
\[\lim_{n\to\infty}\frac{1}{n}\log|(f_\lambda^n)'(\lambda)|=\log d.\]
\end{theorem}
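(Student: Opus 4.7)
The strategy combines the Douady--Hubbard conformal uniformization of the complement of $\mathrm{M}_d$ with Makarov's theorem on the boundary behavior of conformal maps of simply connected domains. Let $\phi_\lambda$ denote the B\"ottcher coordinate of $f_\lambda$ at infinity, normalized by $\phi_\lambda(z)=z+O(1/z)$ and $\phi_\lambda\circ f_\lambda=(\phi_\lambda)^d$, and set $\Phi(\lambda):=\phi_\lambda(\lambda)$. Douady and Hubbard showed that $\Phi:\C\setminus\mathrm{M}_d\to\C\setminus\overline{\D}$ is a conformal isomorphism; under this identification $\mu_{\mathrm{M}_d}$ is the push-forward of normalized Lebesgue measure on $\partial\D$ by the boundary extension of $\Phi^{-1}$. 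The plan is to relate $(f_\lambda^n)'(\lambda)$ to $\Phi'(\lambda)$, apply Makarov along a suitably chosen radial approach, and finally transfer the conclusion to boundary parameters.

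First, iterating the conjugacy gives $\phi_\lambda(c_n(\lambda))=\Phi(\lambda)^{d^n}$ with $c_n(\lambda):=f_\lambda^n(\lambda)$. Differentiating in $\lambda$ yields
\[
\partial_z\phi_\lambda(c_n)\,c_n'(\lambda)+\partial_\lambda\phi_\lambda(c_n)=d^n\,\Phi(\lambda)^{d^n-1}\,\Phi'(\lambda).
\]
For $\lambda\notin\mathrm{M}_d$ with $|c_n|$ bounded below by a large constant, $\partial_z\phi_\lambda(c_n)$ is close to $1$ and $\partial_\lambda\phi_\lambda(c_n)$ is close to $0$. Unrolling the recursion $c'_n=f_\lambda'(c_{n-1})c'_{n-1}+1$ gives $c_n'(\lambda)=(f_\lambda^n)'(\lambda)+\sum_{m=0}^{n-1}(f_\lambda^m)'(c_{n-m})$, whose leading term is $(f_\lambda^n)'(\lambda)$ once the critical orbit escapes fast enough. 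Taking logs, on any small annulus $\{1<|\Phi|\le r_0\}$ one obtains
\[
\log\bigl|(f_\lambda^n)'(\lambda)\bigr|=n\log d+(d^n-1)\log|\Phi(\lambda)|+\log|\Phi'(\lambda)|+O(1),
\]
with $O(1)$ uniform in $n$ and in $\lambda$ within such an annulus.

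Next, I would choose the radial approach $\lambda_n:=\Phi^{-1}(r_n e^{i\theta})$ with $r_n:=1+C/d^n$ for a sufficiently large constant $C$, so that $(d^n-1)\log r_n\to C$ stays bounded and $|c_n(\lambda_n)|\sim r_n^{d^n}\to e^C$ remains large enough to legitimize the B\"ottcher asymptotics used above. Makarov's law of the iterated logarithm for $\log|(\Phi^{-1})'|$ gives, for Lebesgue-a.e. $\theta$,
\[
\bigl|\log|\Phi'(\lambda_n)|\bigr|=O\!\Bigl(\sqrt{\log\tfrac{1}{r_n-1}\cdot\log\log\log\tfrac{1}{r_n-1}}\Bigr)=O\bigl(\sqrt{n\log\log n}\bigr)=o(n).
\]
Inserting in the identity and dividing by $n$ yields $\lim_n\tfrac{1}{n}\log|(f_{\lambda_n}^n)'(\lambda_n)|=\log d$ for a full-Lebesgue set of angles $\theta$, which via Douady--Hubbard corresponds to a $\mu_{\mathrm{M}_d}$-full-measure set of accessible boundary parameters $\lambda_0=\lim_n\lambda_n\in\partial\mathrm{M}_d$.

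The main obstacle will be transferring the statement from the radial approach to the boundary parameter $\lambda_0$ itself: since $\lambda\mapsto\log|(f_\lambda^n)'(\lambda)|$ is a polynomial of $\lambda$-degree growing like $d^n$, a naive continuity argument fails. Through the product formula $(f_\lambda^n)'(\lambda)=\prod_{k=0}^{n-1}d\,c_k(\lambda)^{d-1}$, the desired convergence $\tfrac{1}{n}\log|(f_{\lambda_0}^n)'(\lambda_0)|\to\log d$ is equivalent to the Birkhoff-type statement $\tfrac{1}{n}\sum_{k<n}\log|c_k(\lambda_0)|\to 0$. The bad event where this fails is driven by close returns $|c_k(\lambda_0)|\approx 0$ of the critical orbit to the critical point; a scale-by-scale Borel--Cantelli argument coupling these dyadic close-return events in the dynamical plane with the boundary dilations of $\Phi^{-1}$ at the corresponding parameter scale $d^{-k}$---where Makarov's estimate again plays the decisive role---should show this bad event has $\mu_{\mathrm{M}_d}$-measure zero, concluding the proof.
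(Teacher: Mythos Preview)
This theorem is not proved in the paper; it is quoted as a result of Graczyk--\'Swi\c{a}tek \cite{GS-Mand} and serves only as motivation. The paper explicitly says that their proof ``relies deeply on the fact that $\mu_{\mathrm{M}_d}$ is the harmonic measure of a fully connected compact set of the complex plane and on profound results of Makarov'' and that this approach ``can not be used for arbitrary families of rational maps.'' The paper's own contribution is the Main Theorem, proved by completely different methods (laminar currents, Koebe distortion, the transversality Proposition~\ref{prop:transfer}), and it yields only the weaker lower bound $\tfrac{1}{2}\log d$ in this setting, not the full limit $\log d$.

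Your sketch is therefore not a competing proof of something the paper proves, but rather an outline of the original Graczyk--\'Swi\c{a}tek argument. As such it is on the right track: the Douady--Hubbard uniformization, the B\"ottcher identity $\phi_\lambda(c_n)=\Phi(\lambda)^{d^n}$, the radial scale $r_n-1\asymp d^{-n}$, and Makarov's LIL are exactly the ingredients. However, you yourself flag the real gap: the passage from the radial points $\lambda_n$ to the boundary parameter $\lambda_0$. Your final paragraph replaces a proof with a hope (``a scale-by-scale Borel--Cantelli argument \ldots\ should show''). This is precisely the substantial part of \cite{GS-Mand}; controlling the close returns $|c_k(\lambda_0)|$ for a boundary parameter cannot be read off from the behaviour at nearby exterior parameters without a serious argument, because the dependence of $c_k$ on $\lambda$ has degree $\sim d^k$. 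Until that step is actually carried out, the proposal remains an informed outline rather than a proof.
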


As the measure $\mu_{\mathrm{M}_d}$ has Hausdorff dimension $1$, this result may be reinterpreted as a parametric Ruelle (in)equality ``the Lyapunov exponent of $\mu_{\mathrm{M}_d}$ is equal to $\log d=h_{\bif}(f,a)/\dim\mu_{f,a}$''. 
Here, we generalize partially this result to the case of any pair $(f,a)$. For $\lambda\in \Lambda$, denote $\mathrm{Crit}(f_\lambda):=\{z\in \P^1, \ f_\lambda'(z)=0\}$.
Denote $\omega_{\Lambda}$ a Kähler form on $\Lambda$ so that $T_{f,a}\wedge \omega_{\Lambda}^{\dim(\Lambda)-1}$ is the \emph{trace measure}  of $T_{f,a}$. When $\dim(\Lambda)=1$, for a measure $\mu$ on an open set $U \subset \Lambda$, we define $D_U^*$, the \emph{upper packing dimension} of $\mu$ in $U$ as
\[D_U^*:= \mathrm{supess}_{\lambda \in U} \, \phi^*(\lambda)\leq 2,\] 
where, for $\lambda \in U$, 
\[\phi^*(\lambda):=\limsup_{r\to 0} \frac{\log \mu(B(\lambda,r))}{\log r}.  \]
We have the inequality $D^*\leq 2$ since the upper packing dimension of a measure is less than the dimension of the ambient space. We prove the following, where $f^\#$ is the spherical derivative.

\begin{Theorem}\label{tm:aeCE}
Let $f:\Lambda\times\P^1\to \Lambda\times\P^1$ be an algebraic family of rational maps of degree $d\geq2$ parametrized by a quasi-projective variety $\Lambda$ and let $a:\Lambda\to\P^1$ be a rational function for which there exists $\lambda_0 \in \Lambda$ such that $\{ f^n_{\lambda_0}(a(\lambda_0)), \ n\in\N\} \cap \mathrm{Crit}(f_{\lambda_0})=\varnothing$.  Then
\begin{itemize}
\item when $\dim(\Lambda)=1$, for any subset $U\subset \Lambda$, we have  
	\[\mu_{f,a}-\mathrm{a.e} \ \lambda \in U, \ \liminf_{n\to\infty}\frac{1}{n}\log(f_\lambda^n)^\#(a(\lambda))\geq \frac{\log d}{D_U^*}\geq \frac{1}{2}\log d \]
\item when $\dim(\Lambda)\geq1$, for almost every parameter $\lambda\in\Lambda$ with respect to the trace measure of $T_{f,a}$, we have \[\liminf_{n\to\infty}\frac{1}{n}\log(f_\lambda^n)^\#(a(\lambda))\geq \frac{1}{2}\log d.\]
\end{itemize}
\end{Theorem}
A particularly interesting case is when $a=\mathfrak{c}$ is a marked critical value which is not stably precritical. Then, the Theorem means that, for almost every parameter $\lambda$ with respect to the trace measure of $T_{f,\mathfrak{c}}$, $\mathfrak{c}(\lambda)$ is Collet-Eckmann. Hence, the Large Scale Condition of \cite{AGMV} is generic for the trace measure of $T_{f,\mathfrak{c}}$.

\medskip

Let us say a few words about the strategy of the proof. First, the proof of Graczyk and \'{S}wi\c{a}tek relies deeply on the fact that $\mu_{\mathrm{M}_d}$ is the harmonic measure of a fully connected compact set of the complex plane and on profound results of Makarov on the harmonic measures of such compact sets \cite{Makarov}. As such, it can not be used for arbitrary families of rational maps.

Instead, when $\dim\Lambda=1$, we construct here many disks in the graph of $f^n(a)$ (which is an analytic set of dimension $1$ in a $2$-dimensional space) using classical ideas of the theory of laminar currents (\cite{Dujardin_intersection, thelin_laminaire}). We then use those disks to bound the parametric Lyapunov exponent (see Theorem~\ref{parametric_lyapunov}). We then use a transversality argument to bound the dynamical Lyapunov exponent. Finally, we use Fubini Theorem to deduce the case where $\dim\Lambda\geq 1$.

Nevertheless, for the unicritical family, we do not get the optimal bound of Graczyk and \'{S}wi\c{a}tek since we do not know whether $D^*=1$ for the harmonic measure of the degree $d$ Mandelbrot set (Makarov tells us that $D_*=1\leq D^*$). Still, we show that the bound in Theorem~\ref{tm:aeCE} is sharp, in general, by considering a constant family of Lattès maps with a moving marked point.

\section{On a set of full measure}
In this section, $\Lambda$ is a smooth quasi-projective curve and $f:\Lambda\times \P^1 \to \Lambda\times \P^1$ an algebraic family of rational maps of degree $d\geq 2$.  Let $\omega_{\P^1}$ denote the Fubini-Study form on $\P^1$ and $\omega_\Lambda$ a volume form on $\Lambda$. Denote by $\Pi_{\P^1}:\Lambda\times \P^1 \to \P^1$ and $\Pi_{\Lambda}:\Lambda\times \P^1 \to \Lambda$ the canonical projections. Let $\widehat{\omega}_1:=\Pi_{\P^1}^*(\omega_{\P^1})$ and $\widehat{\omega}_2:=\Pi_{\Lambda}^*(\omega_{\Lambda})$. 
Let $\mu_{f,a}$ be the bifurcation (or activity) measure of $(f,a)$:
\[\mu_{f,a}:=\left(\Pi_{\Lambda}\right)_*\left([\Gamma_a]\wedge \widehat{T}\right).\]
Recall that $\widehat{T}=\lim d^{-n} (f^n)^*(\widehat{\omega}_1)=\widehat{\omega}_1 +dd^c g$ where $g$ is a $\alpha$-Hölder $\widehat{\omega}_1$-psh function.  Let $U\subset \Lambda$. We are interested in the $\mu_{f,a}$-a.e value of the \emph{parametric Lyapunov exponent} defined by
\[\liminf_{n\to\infty} \frac{1}{n} \log \left\|\frac{\partial f^n}{\partial\lambda}(\lambda,a(\lambda)) \right\|. \] 
Here, the norm is computed with respect to the spherical distance on $\P^1$, but, as any equivalent metric will give the same result, the exponent can be computed in some finite charts.  The purpose of this section is to prove 
\begin{theorem}\label{parametric_lyapunov} The parametric Lyapunov exponent satisfies
\[\mu_{f,a}-a.e. \ \lambda \in U, \quad  \liminf_{n\to\infty} \frac{1}{n} \log \left\|\frac{\partial f^n}{\partial\lambda}(\lambda,a(\lambda)) \right\|\geq \frac{\log d}{D_U^*}\geq\frac{\log d}{2} .\] 	
\end{theorem}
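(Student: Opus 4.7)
The plan rests on the standard identity $\mu_{f,a} = \lim_{n\to\infty} d^{-n}(F_n)^*\omega_{\P^1}$, where $F_n(\lambda):=f^n_\lambda(a(\lambda))$, combined with $(F_n)^*\omega_{\P^1}=\|(F_n)'\|^2\,\omega_\Lambda$ (spherical norm). My strategy would be to produce, for each $n$, a geometric decomposition of the graph $\Gamma_{F_n}=f^n(\Gamma_a)\subset \Lambda\times\P^1$ into univalence disks for $F_n$, and then use Koebe's distortion theorem to convert ball-mass information on $\mu_{f,a}$ into a pointwise lower bound on $\|(F_n)'\|$.

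For Step 1, I would construct a disjoint family $(\Delta_{n,i})_i$ of topological disks in $\Lambda$ on which $F_n$ is univalent and whose images $F_n(\Delta_{n,i})$ each contain a spherical disk of uniform radius $\rho>0$. This is modelled on the laminar-current techniques of Dujardin \cite{Dujardin_intersection} and De Th\'elin \cite{thelin_laminaire}: one takes $F_n^{-1}$-preimages of a fine partition of $\P^1$ and retains only the univalent components, discarding those hitting critical points of $F_n$. The non-postcritical hypothesis at $\lambda_0$ is used here, via semicontinuity on a neighborhood, to ensure that the family captures a uniformly positive proportion of the mass of $(F_n)^*\omega_{\P^1}$ as $n$ varies. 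Each $\Delta_{n,i}$ then carries mass at least $\alpha d^{-n}$ (with $\alpha=\pi\rho^2$) for the current $d^{-n}(F_n)^*\omega_{\P^1}$.

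For Step 2, Koebe distortion on the concentric half-radius subdisk of $\Delta_{n,i}$ yields
\[\|(F_n)'(\lambda)\|\;\geq\;c\sqrt{\alpha}\,/\,\mathrm{diam}(\Delta_{n,i}),\]
so the proof reduces to bounding the diameter of the unique disk $\Delta_n(\lambda)$ of the decomposition containing a $\mu_{f,a}$-typical $\lambda$. For Step 3, the definition of $D_U^*$ gives $\phi^*(\lambda)\leq D_U^*$ for $\mu_{f,a}$-a.e.\ $\lambda\in U$, hence $\mu_{f,a}(B(\lambda,r))\geq r^{D_U^*+\varepsilon}$ eventually. Transferring this via weak convergence to $d^{-n}(F_n)^*\omega_{\P^1}$ and combining it with the pairwise disjointness of the $(\Delta_{n,i})_i$ and a packing/covering estimate, a Chebyshev-type bound yields
\[d^{-n}(F_n)^*\omega_{\P^1}\bigl(\{\lambda:\mathrm{diam}(\Delta_n(\lambda))\geq d^{-n/D_U^* + \varepsilon n}\}\bigr)\;\leq\; d^{-\delta n}\]
for some $\delta=\delta(\varepsilon)>0$ and $n$ large. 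Borel--Cantelli then forces $\mathrm{diam}(\Delta_n(\lambda))\leq d^{-n/D_U^*+\varepsilon n}$ for $\mu_{f,a}$-a.e.\ $\lambda$ and $n$ large, whence by Step 2, $\|(F_n)'(\lambda)\|\geq c\sqrt{\alpha}\,d^{n/D_U^* - \varepsilon n}$, and letting $\varepsilon\to 0$ concludes.

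The hard part is Step 1: manufacturing, uniformly in $n$, pairwise disjoint univalence disks that recover a definite proportion of the mass of $(F_n)^*\omega_{\P^1}$ requires delicate control on the critical points of $F_n$ in parameter space, and this is precisely where the non-postcritical assumption at $\lambda_0$ is used, via a semicontinuity argument yielding uniform estimates on a neighborhood. A secondary subtlety lies in Step 3: one must synchronize the limsup scales in the definition of $\phi^*$ with the scale $d^{-n/D_U^*}$ coming from the disks, coupling the geometric packing of the $\Delta_{n,i}$ in $\Lambda$ with the measure-theoretic packing controlled by $D_U^*$ through a Besicovitch-type covering.
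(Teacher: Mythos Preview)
Your overall architecture (laminar decomposition of $f^n(\Gamma_a)$, Koebe, pointwise dimension) matches the paper, but two of the steps, as you have outlined them, would not go through.

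\medskip

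\textbf{The non-postcritical hypothesis is not available.} Theorem~\ref{parametric_lyapunov} has no assumption on the orbit of $a(\lambda_0)$; that hypothesis appears only in the Main Theorem and is used solely in the transfer from parametric to dynamical Lyapunov exponent (Proposition~\ref{prop:transfer}). So you cannot invoke it in Step~1. Fortunately it is not needed: the critical points of $F_n$ are exactly the ramification points of $\Pi_{\P^1}|_{f^n(\Gamma_a)}$, and Riemann--Hurwitz bounds their number by $R_n\leq c\,d^n$ with $c$ depending only on $a$ and $\chi(\Gamma_a)$. This is the mechanism the paper uses, and it works for every family without any postcritical assumption.

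\medskip

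\textbf{A fixed target radius $\rho$ is not enough.} With target squares of fixed size $\rho$, the normalized $\widehat{\omega}_1$-mass of non-island components is $\leq c\rho^2$, a fixed positive number independent of $n$. You therefore only capture a fixed proportion of the mass in good disks, and the set of $\lambda$ lying outside any good disk at time $n$ has measure bounded below uniformly in $n$; Borel--Cantelli then fails in the direction you need. The paper instead takes the target scale to shrink as $e^{-\beta n}$ with $0<\beta\ll\log d$. This makes the normalized $\widehat{\omega}_1$-defect $\leq c\,e^{-2\beta n}$, and---crucially---the $\alpha$-H\"older regularity of the potential of $\widehat{T}$ upgrades this to a $\widehat{T}$-defect $\leq K\,e^{-\beta\alpha n/4}$ (Lemma~\ref{size_of_good_disks}). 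That exponential decay is what makes the sum over $n$ in Lemma~\ref{size_good_parameters} converge and produces a set $W$ of nearly full $\mu_{f,a}$-measure on which $(\lambda,a(\lambda))$ lands in a good disk for \emph{all} large $n$.

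\medskip

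\textbf{The transfer in Step 3 is the wrong way around.} Passing a lower bound on $\mu_{f,a}(B(\lambda,r))$ to $d^{-n}(F_n)^*\omega_{\P^1}$ by weak convergence does not work at the scales you need, and the Chebyshev/packing bound on $\{\lambda:\mathrm{diam}(\Delta_n(\lambda))\geq d^{-n/D^*+\varepsilon n}\}$ is not justified (thin disks can have large diameter and small inradius, so disjointness gives no packing control on diameters). The paper avoids both issues by working with $\widehat{T}$ throughout: for $\lambda\in W$ one applies Koebe~$1/4$ to the \emph{inverse} of $h(\lambda)=\Pi_{\P^1}(f^n(\lambda,a(\lambda)))$ on the image disk (radius $\sim e^{-\beta n}$) to get that $\Pi_\Lambda(\Delta_n)$ contains a ball $B(\lambda,r(\lambda))$ with $r(\lambda)\gtrsim e^{-\beta n}/|h'(\lambda)|$. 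One then bounds $\mu_{f,a}(B(\lambda,r(\lambda)))$ \emph{from above} by $\widehat{T}\wedge[\Delta_n]=d^{-n}\widehat{T}\wedge[\Delta]\lesssim d^{-n}e^{\beta n(2+\alpha/2)}$ (Lemma~\ref{mass_of_disk}), and combines with the pointwise-dimension lower bound $\mu_{f,a}(B(\lambda,r))\geq r^{D^*+\beta}$ to force $r(\lambda)$ small, hence $|h'(\lambda)|$ large. Letting $\beta\to0$ gives the exponent $\log d/D^*$. No weak-convergence step, no diameter packing, and no postcritical hypothesis are involved.
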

The rest of the section is devoted to the proof of the theorem. Observe that it is enough to restrict to the case where $U$ is a disk relatively compact in $\Lambda$ such that $\mu_{f,a}(U)>0$.  To simplify the notations, we write $D^*$ instead of $D_U^*$.

\subsection{Constructing disks in $f^n(\Gamma_a)$ }
Let $\varepsilon>0$. We fix $0< \beta \ll \log d$ and we construct disks of size $e^{-\beta n}$ in $f^n(\Gamma_a)$. We will use classical idea of the theory of laminar currents. 

 We let $\mathcal{C}$ be a finite cover of $\P^1$ given by charts $C$ where $C$ is the unit square in $\C$ centered at $0$. We also let $V$ be an open neighborhood of $\overline{U}$ which can be taken to be a square in $\C$ of size $1$. 

 We can assume that $\mu_{f,a}(V)\stackrel{\forall n}{=}\frac{\left[f^n(\Gamma_a)\right]}{d^n}\wedge \widehat{T} (V\times \P^1) \leq 1$ (up to restricting $U$ and $V$). We subdivide $C$ and $V$ into squares of size $e^{-\beta n}$ which gives us a subdivision of $C\times V$ into (4 dimensional) cubes of size $e^{-\beta n}$, we denote by $\mathcal{P}$ this tiling.  Let $0<\eta<1$. For $P\in \mathcal{P}$ of center $c(P)$, let $P^\eta$ be the image of $P$ by the homothety of ratio $\eta$ and center $c(P)$.  Let $\mathcal{P}^\eta$ denote the union of the $P^\eta$. For $z \in P$, let $\mathcal{P}_z:= z-c(P)+\mathcal{P}$ denote the translation of $\mathcal{P}$ by the vector $z-c(P)$. Finally, let $\mathcal{P}_z^\eta$ denote the union of all the homothetics of elements of $\mathcal{P}_z$. Recall the following result (\cite[Lemme 4.5]{Dujardin_intersection}). 
\begin{lemma}\label{4.5} With the above notations, there exists $z\in P$ such that 
	\[ \frac{\left[f^n(\Gamma_a)\right]}{d^n}\wedge \widehat{T} (\mathcal{P}_z\backslash \mathcal{P}_z^\eta) \leq 2(1-\eta^4).    \]
\end{lemma}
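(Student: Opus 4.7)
The plan is to use the classical translation-averaging (Fubini) argument from the theory of laminar currents, following the spirit of \cite{Dujardin_intersection}. Set
\[\nu := \frac{[f^n(\Gamma_a)]}{d^n} \wedge \widehat{T},\]
a positive measure on $\Lambda \times \P^1$. By $f$-invariance of $\widehat{T}$, the projection $(\Pi_\Lambda)_*\nu$ has the same total mass as $\mu_{f,a}$, so $\nu(V \times \P^1) = \mu_{f,a}(V) \leq 1$. Since every translate $\mathcal{P}_z$, $z \in P$, is contained in the $O(e^{-\beta n})$-neighbourhood of $C \times V$, by taking $V$ slightly larger if necessary one may arrange that $\nu$ on this enlargement has mass at most $2$.

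First, I would exchange the order of integration. Writing $L$ for Lebesgue measure on $P$ (normalised so that $L(P) = |P|$), Fubini's theorem yields
\[
\frac{1}{|P|}\int_P \nu\bigl(\mathcal{P}_z \setminus \mathcal{P}_z^\eta\bigr) \, dL(z)
= \int \frac{L\bigl(\{z \in P : w \in \mathcal{P}_z \setminus \mathcal{P}_z^\eta\}\bigr)}{|P|} \, d\nu(w).
\]
The key geometric observation is that, by translation invariance, for any fixed $w$ lying in the interior of the tiled region, the relative position of $w$ inside its containing tile of $\mathcal{P}_z$ is uniformly distributed as $z$ varies over $P$. Since the homothety $Q^\eta$ of a real $4$-dimensional cube $Q$ has volume $\eta^4\, |Q|$, the inner integrand is bounded above by $1 - \eta^4$.

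Combining these two observations, the average over $z \in P$ of $\nu(\mathcal{P}_z \setminus \mathcal{P}_z^\eta)$ is bounded by $(1-\eta^4)$ times the total mass of $\nu$ on the enlarged region, hence by $2(1-\eta^4)$; a first-moment argument then produces at least one $z \in P$ realising the asserted inequality. The main technical subtlety is the boundary control: translates $\mathcal{P}_z$ may slip outside $C \times V$, and near the boundary of $C \times V$ the probability calculation must be slightly adjusted. This is precisely what forces the constant $2$ rather than $1$ in the bound, and it is a standard book-keeping point in laminar current theory. Note that the computation depends only on Lebesgue-theoretic properties of the tile geometry and on the total-mass bound, not on the analytic structure of $f^n(\Gamma_a)$.
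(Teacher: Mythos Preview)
Your proposal is correct and reproduces the standard translation-averaging argument from \cite{Dujardin_intersection}; note that the paper itself does not prove this lemma but simply cites it as \cite[Lemme~4.5]{Dujardin_intersection}, so there is no alternative approach to compare against. Your handling of the constant $2$ via boundary slippage and the $4$-dimensional volume factor $\eta^4$ are exactly the points that matter.
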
  
We take  \[\eta=(1- \exp(\frac{-\beta n \alpha}{4}))^{\frac{1}{4}} \]
so that $2(1-\eta^4)\leq 2  \exp(\frac{-\beta n \alpha}{4})$ (recall that $\alpha$ is the Hölder exponent of a quasi-potential of $\widehat{T}$). So we translate $C$ and $V$ by the $z$ given by the above lemma. Since $\mathcal{P}$ does not move much as $\mathrm{diam}(P)\leq \exp(-\beta n)$, this gives us new $C$ and $V$ that we still denote $C$ and $V$ since the collection of the new $C$ still covers $\P^1$ and $U\Subset V$ still holds. 

We now construct disks in $f^n(\Gamma_a) \cap \Lambda\times C$. Let $\chi$ denote the Euler characteristic. Then, $\chi(F^n(\Gamma_a))\geq \chi(\Gamma_a)=:\chi_0$ as the Euler characteristic increases by direct image. Let $R_n$ denote the number of ramifications of $(\Pi_{\P^1})|_{f^n(\Gamma_a)}$ and let $d_n$ the topological degree of  $(\Pi_{\P^1})|_{f^n(\Gamma_a)}$. Then $d_n=d^n \times d'$ where $d'$ is the topological degree of $a$. By Riemann-Hurwitz, we have $\chi(f^n(\Gamma_a))= d_n \chi(\P^1)-R_n $ so $R_n \leq c d^n$ for some constant $c$ that does not depend on $n$ nor $\beta$.
  
  Consider the set of connected components of all the preimages $(\Pi_{\P^1})|^{-1}_{f^n(\Gamma_a)}(S)$ where $S$ belongs to the above tiling of $C$ into squares of size $e^{-\beta n}$. We call \emph{island}  such a connected component $I$ for which $(\Pi_{\P^1})|_{f^n(\Gamma_a)}$ is a biholomorphism from $I$ to $S$. In particular, the sum of the degrees of the projection $(\Pi_{\P^1})_{|f^n(\Gamma_a)}$ restricted to each connected component which is not an island is $\leq c d^n$. Let us also remove the islands whose area is $\geq s(\varepsilon)$ ($s(\varepsilon)$ will be made explicit later). As $f^n(\Gamma_a)$ has area $\leq \mathrm{area}(\Gamma_a)d^n$, we have removed at most $ \frac{c d^n}{s(\varepsilon)}$ (taking a larger $c$ if necessary). Let us denote by $I^2_n$ the union of all the other islands, which are those we call \emph{good disks}. Let $B_n:=\left[ I_n^2 \right]/d^n$, then
  \begin{lemma}\label{size_of_good_disks}
  With the above notations, there exist $n_1\in \N$ and a constant $K'(\varepsilon)$ such that  
  	\[\forall n\geq n_1 ,\ \int_{V\times C}  \left(\frac{f^n(\Gamma_a)}{d^n}-B_n \right) \wedge \widehat{T} \leq K(\varepsilon)e^{ \frac{-\beta\alpha n }{4}}. \]
  \end{lemma}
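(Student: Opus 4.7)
The plan is to bound the $\widehat{T}$-mass of the residual current
\[ R_n := \frac{[f^n(\Gamma_a)]}{d^n} - B_n, \]
supported on the union of (i) the non-island components of $\Pi_{\P^1}^{-1}(S)\cap f^n(\Gamma_a)$ (of total projection-degree $\le cd^n$ by the Riemann--Hurwitz bound established in the text) and (ii) the islands of area $\ge s(\varepsilon)$ (of cardinality $\le cd^n/s(\varepsilon)$ by the area bound on $f^n(\Gamma_a)$). The key structural observation is that every such bad piece projects \emph{onto} a full tile $S$ of side $e^{-\beta n}$, so it contributes $(\mathrm{degree})\cdot e^{-2\beta n}$ to the $\widehat{\omega}_1$-mass. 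Summing the combinatorial counts and dividing by $d^n$ yields the preliminary estimate
\[ \int_{V\times C} R_n \wedge \widehat{\omega}_1 \le \frac{C}{s(\varepsilon)}\,e^{-2\beta n}. \]

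To pass from $\widehat{\omega}_1$ to $\widehat{T} = \widehat{\omega}_1 + dd^c g$ (with $g$ the $\alpha$-H\"older quasi-potential), split $V\times C = \mathcal{P}^\eta \sqcup (\mathcal{P}\setminus\mathcal{P}^\eta)$ and use $R_n \le [f^n(\Gamma_a)]/d^n$ combined with Lemma~\ref{4.5} on the boundary region:
\[ \int_{\mathcal{P}\setminus\mathcal{P}^\eta} R_n\wedge \widehat{T} \le 2(1-\eta^4) \le 2\,e^{-\alpha\beta n/4}. \]
On $\mathcal{P}^\eta$ the $\widehat{\omega}_1$-contribution is handled by the preliminary bound, and the remaining $\int_{\mathcal{P}^\eta} R_n\wedge dd^c g$ is attacked by integration by parts against a cutoff adapted to the tiling, using the $\alpha$-H\"older regularity of $g$ (oscillation $\le Ce^{-\alpha\beta n}$ over each tile of $\mathcal{P}$) to extract the decay factor. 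The resulting estimate is again of order $e^{-\alpha\beta n}/s(\varepsilon)$, so combining the three contributions and noting that $e^{-2\beta n}$ and $e^{-\alpha\beta n}$ are both dominated by $e^{-\alpha\beta n/4}$ for $n$ large gives the announced bound with $K(\varepsilon) = O(1/s(\varepsilon))$.

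\textbf{Main obstacle.} The technically delicate step is the H\"older control of $\int_{\mathcal{P}^\eta} R_n\wedge dd^c g$. A crude Chern--Levine--Nirenberg bound $\|R_n\wedge\widehat{T}\| \le \|g\|_\infty\cdot\mathrm{area}(R_n)$ is useless here: the total area of big islands can grow like $d^n$, giving only an $O(1)$ estimate. The decay $e^{-\alpha\beta n}$ must instead come from the tile-scale H\"older modulus of $g$; one must balance integration by parts against tile-sized cutoffs (whose $dd^c$-norm is of order $e^{2\beta n}$) against the oscillation of $g$ on a single tile (of order $e^{-\alpha\beta n}$), the mismatch being absorbed by the smallness of $\int R_n\wedge\widehat{\omega}_1 = O(e^{-2\beta n}/s(\varepsilon))$ established in the first step.
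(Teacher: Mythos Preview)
Your proposal is correct and follows the same route as the paper: bound the $\widehat{\omega}_1$-mass of $R_n$ combinatorially, peel off the shell $\mathcal{P}\setminus\mathcal{P}^\eta$ via Lemma~\ref{4.5}, and handle the $dd^c g$-term by integration by parts against a cutoff supported in the tiles, trading the oscillation of $g$ (of order $e^{-\alpha\beta n}$) against the $C^2$-norm of the cutoff and the smallness of the mass of $R_n$.

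Two small points to tighten. First, the cutoff $\Psi$ must equal $1$ on $P^\eta$ and vanish near $\partial P$; since $\eta=(1-e^{-\alpha\beta n/4})^{1/4}$ is very close to $1$, the transition layer has width $(1-\eta)e^{-\beta n}\asymp e^{-\beta n(1+\alpha/4)}$, so $\|\Psi\|_{C^2}$ is of order $e^{\beta n(2+\alpha/2)}$, not $e^{2\beta n}$ as you wrote. This costs an extra factor $e^{\alpha\beta n/2}$ in the third term, but the final product is still $O(e^{-\alpha\beta n/2}/s(\varepsilon))$, safely below $e^{-\alpha\beta n/4}$. Second, after Stokes you need to dominate $|dd^c\Psi|$ by $\|\Psi\|_{C^2}(\widehat{\omega}_1+\widehat{\omega}_2)$, since the tiles (hence $\Psi$) live in $V\times C$; so you also need the $\widehat{\omega}_2$-mass of $R_n$, not just $\widehat{\omega}_1$. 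This is immediate from the fact that $f^n(\Gamma_a)$ is a graph over $\Lambda$, giving $\langle d^{-n}[f^n(\Gamma_a)],\widehat{\omega}_2|_V\rangle\le d^{-n}$, which is negligible. With these two adjustments your outline coincides with the paper's proof.
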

\begin{proof}
First, observe that there exist $n_0 \in \N$ such that $\forall n\geq n_0$
\begin{align}\label{bound_in_mass}
\left\langle \frac{f^n(\Gamma_a)}{d^n} -B_n, \widehat{\omega}_1|_{C} +\widehat{\omega}_2|_{V}  \right\rangle &\leq     \left\langle \frac{f^n(\Gamma_a)}{d^n} -B_n, \widehat{\omega}_1|_{C}  \right\rangle +\left\langle \frac{f^n(\Gamma_a)}{d^n},  \widehat{\omega}_2|_{V} \right \rangle  \nonumber\\
& \leq \frac{cd^ne^{-2\beta n}}{d^n}+  \frac{ c e^{-2\beta n} d^n}{s(\varepsilon)d^n}  +\frac{1}{d^n}\nonumber \\ 
&\leq \frac{ 3c e^{-2\beta n}}{s(\varepsilon)}  \end{align}  
where we used that  $\beta \ll \log d$ and that $f^n(\Gamma_a)$ is a graph (hence the area of the projection on the first coordinate is the area of $V$). 
We now follow ideas of Dujardin (\cite{Dujardin_intersection}). Take a smooth cut-off function $\Psi$ which is equal to $1$ on $P^\eta$ and $0$ near $\partial P$ for every $P\in \mathcal{P}$ and such that there exists a constant $K$ independent of $n$ satisfying
\begin{align}\label{page_5}
 \|\Psi\|_{\mathcal{C}^2} &\leq K \left(\frac{1}{ (1-\eta)e^{-\beta n}}\right)^2 \nonumber\\
 	                       &\leq \frac{K e^{2\beta n}}{\left(1-\left(1-e^{\frac{-\beta n \alpha}{4}}\right)^\frac{1}{4}\right)^2} \simeq \frac{Ke^{2\beta n}}{\left(1-1+\frac{1}{4}e^{\frac{-\beta n \alpha}{4}}\right)^2} \nonumber \\
 	                      & \leq 20K e^{\beta n(2+ \frac{\alpha}{2})}
 \end{align}	
 for $n\geq n_0$. Writing as above $\widehat{T}=\widehat{\omega}_1+ dd^c g$ gives
 \begin{align*}
 \int_{V\times C}  \left(\frac{f^n(\Gamma_a)}{d^n}-B_n \right) \wedge \widehat{T} &\leq \int_{\mathcal{P}\backslash \mathcal{P}^\eta}  \left(\frac{f^n(\Gamma_a)}{d^n}-B_n \right) \wedge \widehat{T}+\int \Psi  \left(\frac{f^n(\Gamma_a)}{d^n}-B_n \right) \wedge \widehat{T} \\
                  &\leq 2 e^{-\frac{\beta n \alpha}{4}} +\int \Psi  \left(\frac{f^n(\Gamma_a)}{d^n}-B_n \right) \wedge \widehat{\omega}_1 \\
                   & \quad +\int \Psi  \left(\frac{f^n(\Gamma_a)}{d^n}-B_n \right) \wedge dd^c g \\
                   &\leq 2 e^{-\frac{\beta n \alpha}{4}}+  \frac{ 3c e^{-2\beta n}}{s(\varepsilon)} +\int  \Psi  \left(\frac{f^n(\Gamma_a)}{d^n}-B_n \right) \wedge dd^c g
  \end{align*} 
  where we used Lemma~\ref{4.5} and the bound \eqref{bound_in_mass}. For the last term, by Stokes ($c(P)$ denotes the center of the cube $P$):
  \begin{align*}
\int  \Psi  \left(\frac{f^n(\Gamma_a)}{d^n}-B_n \right) \wedge dd^c g &= \sum_{P\in \mathcal{P}} \int_P  \Psi  \left(\frac{f^n(\Gamma_a)}{d^n}-B_n \right) \wedge dd^c g  \\
                               &=\sum_{P\in \mathcal{P}} \int_P (g-g(c(P)))    \left(\frac{f^n(\Gamma_a)}{d^n}-B_n \right) \wedge  dd^c \Psi  \\
                               &\leq \sum_{P\in \mathcal{P}} \int_P |g-g(c(P))|    \left(\frac{f^n(\Gamma_a)}{d^n}-B_n \right) \\
                               & \quad \quad  \wedge  20K e^{\beta n(2+ \frac{\alpha}{2})} \left(\widehat{\omega}_1|_{C} +\widehat{\omega}_2|_{V}\right).
  \end{align*}
  Now, $ |g-g(c(P))| \leq c e^{-\beta \alpha n}$ since $g$ is $\alpha$-Hölder (we can take the same $c$ than in \eqref{bound_in_mass} up to increasing it) so that
 \begin{align*}
 \int  \Psi  \left(\frac{f^n(\Gamma_a)}{d^n}-B_n \right) \wedge dd^c g &\leq  c e^{-\beta \alpha n}  20K e^{\beta n(2+ \frac{\alpha}{2})} \int \left(\frac{f^n(\Gamma_a)}{d^n}-B_n \right) \\
  &\quad \quad  \wedge \left(\widehat{\omega}_1|_{C}+\widehat{\omega}_2|_{V}\right) 
 \\
   &\leq 20 Ke^{2\beta n} e^{-\beta \frac{\alpha}{2} n}  \frac{ 3c^2 e^{-2\beta n}}{s(\varepsilon)}\\
   &\leq K'(\varepsilon) e^{-\beta \frac{\alpha}{2} n}  
 \end{align*}  
 where we used \eqref{bound_in_mass}, $K'(\varepsilon)$ is a large enough constant and $n\geq n_0$. Combining all the above gives a rank $n_1 \geq n_0$ and a constant $K(\varepsilon)$ such that for $n\geq n_1$:
  \[\int_{V\times C}  \left(\frac{f^n(\Gamma_a)}{d^n}-B_n \right) \wedge \widehat{T} \leq 2 e^{-\frac{\beta n \alpha}{4}}+  \frac{ 3c e^{-2\beta n}}{s(\varepsilon)}+ K'(\varepsilon) e^{-\beta \frac{\alpha}{2} n} \leq K(\varepsilon)e^{ \frac{-\beta\alpha n }{4}}. \]
\end{proof}
Taking a finite cover of $\P^1$, we have the above estimate on $V \times \P^1$.

\subsection{Using the above disks to bound the Lyapunov exponent}
We first show that we can find an arbitrary large set in $U$ of parameters $\lambda$ for which the point-wise dimension of $\mu_{f,a}$ is controlled by $D^*$ and for which all the corresponding points $(\lambda, f^n_\lambda(a(\lambda)))$ belong to a good disk constructed above. Then, using Koebe's Distortion Theorem, we bound from below the parametric Lyapunov exponents. 
\begin{lemma}\label{size_good_parameters}
	With the above notations, there exist a set $W\subset U$, integers $n_2\in \N$ and $\ell_0\in \N$ such that  
	\begin{itemize}
		\item $\mu_{f,a}(U\backslash W) \leq \varepsilon$, 
		\item $\forall \lambda \in W, \ \forall n\geq n_2$,  $f^n(\lambda,z)$ belongs to a good disk $D\subset f^n(\Gamma_a)$ and  $f^n(\lambda,z)\notin \mathcal{P}\backslash \mathcal{P}^\eta$,
		\item $\forall \lambda \in W, \ \forall r<\frac{1}{\ell_0}$,   $ \mu_{f,a}(B(\lambda,r))\geq r^{D^*+\beta}$.
	\end{itemize}
\end{lemma}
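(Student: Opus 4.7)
The plan is to build $W$ as an intersection $W_1\cap W_2$, with $W_1$ realizing the pointwise lower-density condition (third bullet) and $W_2$ the good-disk-plus-interior condition (second bullet), each arranged so that its complement in $U$ has $\mu_{f,a}$-mass at most $\varepsilon/2$.

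For $W_1$, I would invoke the definition of $D^*=D_U^*$ directly: for $\mu_{f,a}$-a.e.\ $\lambda\in U$ one has $\phi^*(\lambda)\leq D^*$, so there exists $r_0(\lambda)>0$ such that $\mu_{f,a}(B(\lambda,r))\geq r^{D^*+\beta}$ for all $r<r_0(\lambda)$. Since $\lambda\mapsto r_0(\lambda)$ is measurable, the sets $W_1^{(\ell)}:=\{\lambda\in U: r_0(\lambda)\geq 1/\ell\}$ increase to a subset of $U$ of full $\mu_{f,a}$-measure, so by continuity of measure there is $\ell_0$ with $\mu_{f,a}(U\setminus W_1^{(\ell_0)})\leq \varepsilon/2$; I set $W_1:=W_1^{(\ell_0)}$.

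For $W_2$, I would exploit the pushforward identity $\mu_{f,a}=(\Pi_\Lambda)_*([f^n(\Gamma_a)]/d^n\wedge \widehat{T})$ that is recalled just before Lemma~\ref{4.5}. Since $f^n(\Gamma_a)$ is a graph of degree one over $\Lambda$, the unique point of $f^n(\Gamma_a)$ above any $\lambda$ is $(\lambda, f^n_\lambda(a(\lambda)))$. Let $E_n^{(1)}\subset U$ be the set of $\lambda$ for which this point does not lie in $I_n^2$, and let $E_n^{(2)}\subset U$ be the set for which it lies in $\mathcal{P}\setminus \mathcal{P}^\eta$. By the graph structure, $M_n:=[f^n(\Gamma_a)]/d^n\wedge \widehat{T}$ restricted to $E_n^{(1)}\times \P^1$ is carried by $f^n(\Gamma_a)\setminus I_n^2$ and therefore coincides with $([f^n(\Gamma_a)]/d^n-B_n)\wedge \widehat{T}$ there, whence Lemma~\ref{size_of_good_disks} gives $\mu_{f,a}(E_n^{(1)})\leq K(\varepsilon)e^{-\beta\alpha n/4}$; similarly Lemma~\ref{4.5} yields $\mu_{f,a}(E_n^{(2)})\leq 2e^{-\beta\alpha n/4}$. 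Summing the resulting geometric series and choosing $n_2\geq n_1$ large enough, I obtain $\mu_{f,a}\bigl(\bigcup_{n\geq n_2}(E_n^{(1)}\cup E_n^{(2)})\bigr)\leq \varepsilon/2$, and I set $W_2:=U\setminus \bigcup_{n\geq n_2}(E_n^{(1)}\cup E_n^{(2)})$.

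The main obstacle I anticipate is the pushforward bookkeeping that converts the current estimates on $V\times \P^1$ into $\mu_{f,a}$-bounds on the parameter side. Because $f^n(\Gamma_a)$ is a degree-one graph over $\Lambda$, each bad parameter contributes exactly one atom on the bad part of $f^n(\Gamma_a)$, so the identification of currents above is transparent, and the rest is soft. Taking $W:=W_1\cap W_2$ then delivers all three bullets simultaneously.
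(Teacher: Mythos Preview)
Your proposal is correct and follows essentially the same approach as the paper: the paper's $B_{\ell_0}$ is your $W_1$ and its $\Pi_\Lambda(\widehat{A})$ is your $W_2$, with the same Borel--Cantelli/summable-tail argument driven by Lemma~\ref{4.5} and Lemma~\ref{size_of_good_disks}. The only cosmetic differences are that the paper phrases the second part on $\Gamma_a$ (equivalent to your parameter-side version since $f^n(\Gamma_a)$ is a degree-one graph over $\Lambda$) and sums explicitly over the finite cover $\mathcal{C}$ of $\P^1$, which in your write-up is absorbed into the constant.
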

\begin{proof}
	Take $n \geq n_2\geq n_1$ such that  
\[\sum_{C\in \mathcal{C} }\sum_{n\geq n_2} (2+K(\varepsilon))e^{ \frac{-\beta\alpha n }{4}} \leq \frac{\varepsilon}{2},\]
where  $\mathcal{C}$ denotes the finite cover of $\P^1$ defined in the previous section and $K(\varepsilon)$ is the constant given by Lemma~\ref{size_of_good_disks} (we can take the same constant $K(\varepsilon)$ for every $C$). Denote
\begin{align*} \widehat{A}:= &\{(\lambda,z)\in \left(U\times \P^1\right)\cap \Gamma_a, \ \forall n\geq n_2, f^n(\lambda,z)\ \mathrm{belongs\ to\ a\ good\ disk}\ D\subset f^n(\Gamma_a) \\
  & \quad \mathrm{and \ } f^n(\lambda,z)\notin \mathcal{P}\backslash \mathcal{P}^\eta  \} \quad \mathrm{and}\\
 \widehat{A}_n:= &\{(\lambda,z)\in \left(U\times \P^1\right)\cap \Gamma_a, \ f^n(\lambda,z)\ \mathrm{belongs\ to\ a\ good\ disk}\ D\subset f^n(\Gamma_a) \\ & \quad \mathrm{and \ } f^n(\lambda,z)\notin \mathcal{P}\backslash \mathcal{P}^\eta  \} 
\end{align*}
so that $\widehat{A}=\cap_{n\geq n_2} \widehat{A}_n$.  Then, using $f^*\widehat{T}=d\widehat{T}$ and Lemma~\ref{4.5}: 
\begin{align*} \left[\Gamma_a\right] \wedge \widehat{T}\left(\widehat{A}^c\cap \Pi_{\Lambda}^{-1} U\right) &= \left[\Gamma_a\right] \wedge \widehat{T}\left(  \left(\cup_{n\geq n_2} \widehat{A}_n^c \right)\cap \Pi_{\Lambda}^{-1} U\right) \\
 &\leq \ \sum_{n\geq n_2}  \frac{\left[f^n(\Gamma_a)\right]}{d^n} \wedge \widehat{T}\left(f^n(\widehat{A}_n^c)\cap \Pi_{\Lambda}^{-1} U\right)     \\
   &\leq \sum_{C\in \mathcal{C}}  \sum_{n\geq n_2}   \frac{\left[f^n(\Gamma_a)\right]}{d^n} \wedge \widehat{T}\left(f^n(\widehat{A}_n^c)\cap \left(U\times C\right)\right) \\
   &\leq  \sum_{C\in \mathcal{C}} \sum_{n\geq n_2}  \frac{\left[f^n(\Gamma_a)\right]}{d^n} \wedge \widehat{T}\left(\mathcal{P}\backslash\mathcal{P}^\eta \right)+ K(\varepsilon) e^{ \frac{-\beta\alpha n }{4}} \\
   &\leq  \sum_{C\in \mathcal{C}} \sum_{n\geq n_2} 2e^{-\frac{\beta \alpha n}{4}}+ K(\varepsilon) e^{ \frac{-\beta\alpha n }{4}} \leq \frac{\varepsilon}{2}.
\end{align*}
Hence $\widehat{T}\wedge[\Gamma_a]\left(\widehat{A}\right)\geq \mu_{f,a}(U)-\varepsilon/2$.
Now, recall that by definition of $D^*$, we can find $B\subset U$ such that $\mu_{f,a}(U\backslash B )=0$ and 
\[\forall \lambda\in B, \ \limsup_{r\to 0} \frac{\log \mu_{f,a}(B(\lambda,r))}{\log r} \leq D^*.\]
In particular, 
\[\forall \lambda \in B, \ \exists r_0, \ \forall r\leq r_0, \ \mu_{f,a}(B(\lambda,r))\geq r^{D^*+\beta}.\]
Let 
\[B_\ell:= \left\{ \lambda\in B, \ \forall r<\frac{1}{\ell},  \  \mu_{f,a}(B(\lambda,r))\geq r^{D^*+\beta}\right\}.\]
In particular, $\cup_\ell B_\ell =B$ and the union is increasing so that we can choose $\ell_0$ large enough so that $\mu_{f,a} (B_{\ell_0}) \geq \mu_{f,a}(U) - \varepsilon/2$. Then, the set $W:=B_{\ell_0}\cap \Pi_{\Lambda}\left(\widehat{A}\right)$ satisfies
\[\mu_{f,a}(W)\geq \mu_{f,a}(U)-\varepsilon \]
since $\mu_{f,a}\left(\Pi_{\Lambda}\left(\widehat{A}\right)\right)= \widehat{T}\wedge[\Gamma_a]\left( \Pi_\Lambda^{-1}\Pi_{\Lambda}(\widehat{A})\right)\geq  \widehat{T}\wedge[\Gamma_a]\left((\widehat{A})\right)\geq \mu_{f,a}(U)-\varepsilon/2$. This proves the lemma.
\end{proof}

Let $W$ be given by the above lemma and pick $\lambda \in W$. Let $n\geq n_2$, by definition, there exists a disk $D$ above a square $S$ of size $e^{-\beta_n}$ in the chart $C$ such that $(\lambda,a(\lambda))\in D$. As $f^n(\lambda,a(\lambda)) \notin \mathcal{P}\backslash\mathcal{P}^\eta$, then $\Pi_{\P^1}\left(f^n(\lambda,a(\lambda))\right) \in C^\eta$ (the homothetic of $C$ of ratio $\eta$ with respect to its center). Define
\[\eta':=\frac{1+\eta}{2}.\]  
Let $\Delta:= \Pi_{\P^1}^{-1}(C^{\eta'})\cap D$ and let $\Delta_n \subset \Gamma_a$ be the preimage of $\Delta$ by $f^n$ ($f^n$ is injective on $\Gamma_a$).  
\begin{lemma}\label{mass_of_disk} With the above notations, there exists an integer $n_3\geq n_2$ such that
	\[\forall n\geq n_3, \ \int \left(1_{B(\lambda, \frac{1}{\ell_0})}\circ \Pi_{\Lambda} \right)  \widehat{T}\wedge [\Delta_n] \leq  \frac{200K e^{\frac{\beta n\alpha}{2}} e^{2\beta n}}{d^n}  \]
\end{lemma}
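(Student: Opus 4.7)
The strategy is a two-step reduction: first transfer the integral from $\Delta_n$ to $\Delta$ via the pullback identity $(f^n)^*\widehat{T}=d^n\widehat{T}$, and then bound the resulting integral on $\Delta$ using a smooth cut-off and Stokes' theorem, following the scheme of Lemma~\ref{size_of_good_disks}.

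For the reduction step, I would observe that, since $\Gamma_a$ is a graph over $\Lambda$ and $f^n$ preserves the first coordinate, the restriction $f^n|_{\Delta_n}:\Delta_n\to\Delta$ is a biholomorphism, and $\Pi_\Lambda\circ f^n=\Pi_\Lambda$ on $\Gamma_a$. Combined with $(f^n)^*\widehat{T}=d^n\widehat{T}$, this gives the change-of-variable identity
\[\int (1_{B(\lambda,1/\ell_0)}\circ\Pi_\Lambda)\,\widehat{T}\wedge[\Delta_n]=\frac{1}{d^n}\int (1_{B(\lambda,1/\ell_0)}\circ\Pi_\Lambda)\,\widehat{T}\wedge[\Delta],\]
so the whole problem is to bound the right-hand integral by $200K\,e^{\beta n(2+\alpha/2)}$.

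For the bounding step, I would pick a smooth cut-off $\Psi$ on $\P^1$, pulled back via $\Pi_{\P^1}$, equal to $1$ on $S^{\eta'}$ and $0$ near $\partial S$. The transition zone has width $(1-\eta')e^{-\beta n}\simeq e^{-\beta n(1+\alpha/4)}$, so the computation of \eqref{page_5} yields $\|\Psi\|_{\mathcal{C}^2}\leq 20K\,e^{\beta n(2+\alpha/2)}$. Since $\Psi\geq 1_\Delta$ and $1_{B(\lambda,1/\ell_0)}\circ\Pi_\Lambda\leq 1$, the right-hand integral above is bounded by $\int\Psi\,\widehat{T}\wedge[D]$. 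Decomposing $\widehat{T}=\widehat{\omega}_1+dd^cg$: the $\widehat\omega_1$-contribution is bounded by the area of $\Pi_{\P^1}(D)=S$, namely $e^{-2\beta n}$. For the $dd^cg$-contribution, Stokes applies without boundary contribution because $\Psi$ vanishes in a neighborhood of $\Pi_{\P^1}^{-1}(\partial S)\supset\partial D$, so that $\int\Psi\,dd^cg\wedge[D]=\int g\,dd^c\Psi\wedge[D]$. Using that $g$ is bounded on the compact set $\overline{V}\times\overline{C}$ by some $M$ and that $dd^c\Psi\leq\|\Psi\|_{\mathcal{C}^2}\widehat\omega_1$ (as $\Psi$ depends only on the $\P^1$-coordinate), this is at most $M\,\|\Psi\|_{\mathcal{C}^2}\int\widehat\omega_1\wedge[D]\leq 20MK\,e^{\beta n(2+\alpha/2)}\cdot e^{-2\beta n}=20MK\,e^{\beta n\alpha/2}$. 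Both contributions are comfortably within the claimed bound after the overall division by $d^n$.

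The main technical point to be careful about is that Stokes produces no boundary contribution, which hinges on the geometric fact that the only boundary of the good disk $D$ lies above $\partial S$ (so that the $\P^1$-cut-off indeed vanishes on $\partial D$); this is built into the definition of $D$ as a biholomorphic preimage of $S$ in $f^n(\Gamma_a)$. A secondary remark is that, although $g$ is $\alpha$-H\"older, only its boundedness is exploited here, which explains why the present bound is much weaker than the decaying estimate produced in Lemma~\ref{size_of_good_disks}. The integer $n_3$ is then chosen large enough for the $\|\Psi\|_{\mathcal{C}^2}$ estimate and the area bound to be valid simultaneously, and to absorb the various numerical constants into the prefactor $200K$.
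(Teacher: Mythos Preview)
Your approach is essentially the same as the paper's: reduce from $\Delta_n$ to $\Delta$ via the identity $(f^n)^*\widehat T=d^n\widehat T$ (the paper does this at the end, you at the beginning), then bound $\int_\Delta\widehat T$ by introducing a cut-off in the $\P^1$-direction, applying Stokes, and using the $\mathcal C^2$-norm estimate of \eqref{page_5}. Your observation that $\partial D\subset\Pi_{\P^1}^{-1}(\partial S)$ (because $D$ is an island, i.e.\ a full connected component of $\Pi_{\P^1}^{-1}(S)$ inside $f^n(\Gamma_a)$) is exactly what makes the $\P^1$-cut-off alone compactly supported in $D$, so Stokes produces no boundary term.

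The one genuine difference is that the paper also introduces a $\Lambda$-cut-off $\varphi$, equal to $1$ on $B(\lambda,1/\ell_0)$ and vanishing near $\partial B(\lambda,2/\ell_0)$, and applies Stokes to the product $\psi\varphi$. This is not needed for Stokes (your argument covers that), but it serves to localize the integration in the $\Lambda$-direction to the compact set $\overline{B(\lambda,2/\ell_0)}$, where $\|g\|_\infty$ is controlled. In your version you assert that $g$ is bounded on $\overline V\times\overline C$; however, you have not checked that $D\subset V\times C$. The island $D$ contains $f^n(\lambda,a(\lambda))$, whose $\Lambda$-coordinate is $\lambda\in U$, but the rest of $D$ may a priori project outside $V$ (the area bound $\mathrm{area}(D)\le s(\varepsilon)$ gives no diameter control on $\Pi_\Lambda(D)$). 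This is easy to fix---either invoke that $g$ is globally $\alpha$-H\"older on $\Lambda\times\P^1$ (hence bounded), or reinstate the $\Lambda$-cut-off as the paper does---but you should say which route you take. With that clarification, your argument is correct and in fact yields the slightly sharper bound $O(e^{\beta n\alpha/2})/d^n$, well inside the stated $200K\,e^{\beta n(2+\alpha/2)}/d^n$.
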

\begin{proof}
Let $\psi$ be a smooth cut-off function on $\P^1$ which is equal to $1$ on $C^{\eta'}$ and $0$ near $\partial C$ and $\varphi$ be a smooth cut-off function on $\Lambda$ which is equal to $1$ on $B(\lambda, \frac{1}{\ell_0})$ and $0$ near $\partial B(\lambda, \frac{2}{\ell_0})$ so that ($K$ is a universal constant) \begin{itemize}
	\item  $d\psi\wedge d^c \psi \leq  K ((1-\eta')e^{-\beta n})^{-2} \omega_{\P^1}$ and $ dd^c \psi -K ((1-\eta')e^{-\beta n})^{-2} \omega_{\P^1} \leq 0$. 
\item $d\varphi\wedge d^c \varphi \leq  K (\ell_0)^{-2} \omega_{\Lambda}$ and 
$ dd^c \psi -K \ell_0^{-2} \omega_{\Lambda}\leq 0$.
\end{itemize}
 Then, by Stokes
\begin{align*}
\int_{\Delta \cap \Pi_\Lambda^{-1}(B(\lambda, \frac{1}{l_0}))}\widehat{T}  &\leq \int   \Pi^*_{\P^1}(\psi) \Pi_{\Lambda}^*(\varphi)[D]\wedge \widehat{T} \\
&\leq  \int \Pi^*_{\P^1} (\psi) \Pi_{\Lambda}^*(\varphi) [D]\wedge \widehat{\omega}_1 +  \int \Pi^*_{\P^1}(\psi) \Pi_{\Lambda}^*(\varphi) \psi [D]\wedge dd^c g  \\
& \leq \int \Pi^*_{\P^1}( \psi) [D]\wedge \widehat{\omega}_1 + \int  g [D]\wedge dd^c\left( \Pi^*_{\P^1} (\psi ) \Pi_{\Lambda}^*(\varphi)\right)\\
&\leq 1 +  \int  g [D]\wedge dd^c\left( \Pi^*_{\P^1} (\psi ) \Pi_{\Lambda}^*(\varphi)\right).
\end{align*}	
Now,  
\begin{align*} dd^c\left( \Pi^*_{\P^1} (\psi ) \Pi_{\Lambda}^*(\varphi)\right)&= \Pi_{\Lambda}^*(\varphi) dd^c\left( \Pi^*_{\P^1} (\psi ) \right)+\Pi^*_{\P^1} (\psi ) dd^c\left(  \Pi_{\Lambda}^*(\varphi)\right)+ \Pi_{\Lambda}^*(\varphi)  \Pi^*_{\P^1} (d\psi \wedge d^c \psi ) \\
 &\quad + \Pi_{\P^1}^*(\psi) \Pi_{\Lambda}^*(d\varphi\wedge d^c \varphi). 
\end{align*}
So we have the bound
\begin{align*}
\int_{\Delta \cap \Pi_\Lambda^{-1}(B(\lambda, \frac{1}{l_0}))} \widehat{T}  &\leq 1 +2\|g\|_{\infty}  \int_{\Pi_\Lambda^{-1}(B(\lambda, \frac{2}{l_0}))}  [D]\wedge K ((1-\eta')e^{-\beta n})^{-2} \widehat{\omega}_1\\
&\quad +2\|g\|_{\infty}  \int_{\Pi_\Lambda^{-1}(B(\lambda, \frac{2}{l_0}))}  [D]\wedge K \ell_0^{-2} \widehat{\omega}_2 \\
 &\leq 1+ 2\|g\|_\infty K (4((1-\eta')e^{-\beta n})^{-2}+ 4)\\
 &\leq 200K e^{\frac{\beta n\alpha}{2}} e^{2\beta n} 
\end{align*}
where we used the computations in \eqref{page_5} and assume $n\geq n_3\geq n_2$. In particular, using that $f^n(\Delta_n)=\Delta$ and the fact that $f^n$ is injective on $\Gamma_a$  gives:
\[ \int \left(1_{B(\lambda, \frac{1}{\ell_0})}\circ \Pi_{\Lambda} \right)  \widehat{T}\wedge [\Delta_n] =\frac{1}{d^n} \int   \left(1_{B(\lambda, \frac{1}{\ell_0})}\circ \Pi_{\Lambda} \right)  \widehat{T}\wedge [\Delta]\leq  \frac{200K e^{\frac{\beta n\alpha}{2}}e^{2\beta n}  }{d^n}   .     \] 
\end{proof}

\begin{lemma}\label{plusdidee}
For $\lambda \in W$, we have that
\[\liminf_{n\to \infty} \frac{1}{n} \log  \left\| \frac{\partial f^n}{\partial \lambda}(\lambda,a(\lambda))\right\| \geq \frac{\log d-\frac{\beta\alpha}{2} -2\beta}{D^*+\beta} -\frac{\beta \alpha}{4}-\beta. \]
\end{lemma}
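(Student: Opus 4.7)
The plan is to parameterize the good disk $D$ as a holomorphic graph over $\Lambda$, apply Koebe's $\tfrac14$-theorem to produce a round ball around $\lambda$ contained in $\Pi_\Lambda(\Delta_n)$, and then confront the mass bound of Lemma~\ref{mass_of_disk} with the density lower bound on $\mu_{f,a}$ coming from the membership $\lambda \in W$.

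Concretely, since $D \subset f^n(\Gamma_a)$, write $D$ as the graph of $g_n(\lambda'):= f^n_{\lambda'}(a(\lambda'))$ over $E_n := \Pi_\Lambda(D) \subset \Lambda$; this gives univalent maps $g_n : E_n \to S$ and $h_n := g_n^{-1} : S \to E_n$ with
\[
\left\|\frac{\partial f^n}{\partial \lambda}(\lambda, a(\lambda))\right\| = |g_n'(\lambda)| = \frac{1}{|h_n'(g_n(\lambda))|}.
\]
By Lemma~\ref{size_good_parameters}, $g_n(\lambda) \in S^{\eta}$, so with $\rho := \tfrac{1-\eta}{4}\, e^{-\beta n}$ the ball $B(g_n(\lambda), \rho)$ lies in $S^{\eta'}$. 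Koebe's $\tfrac14$-theorem applied to the univalent map $h_n$ on $B(g_n(\lambda), \rho)$ yields
\[
B(\lambda, R) \subset h_n\bigl(B(g_n(\lambda), \rho)\bigr) \subset h_n(S^{\eta'}) = \Pi_\Lambda(\Delta_n), \qquad R := \frac{\rho}{4\,|g_n'(\lambda)|}.
\]

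If $R \geq 1/\ell_0$ occurred for arbitrarily large $n$, Lemma~\ref{mass_of_disk} would force $\mu_{f,a}(B(\lambda, 1/\ell_0)) \leq 200 K\, e^{\beta n(\alpha/2 + 2)}/d^n \to 0$ (using $\beta \ll \log d$), contradicting the fixed positive lower bound $\mu_{f,a}(B(\lambda, 1/\ell_0)) \geq (1/\ell_0)^{D^* + \beta}$ provided by $\lambda \in W$. Hence $R < 1/\ell_0$ eventually, and then
\[
R^{D^*+\beta} \leq \mu_{f,a}(B(\lambda, R)) \leq \mu_{f,a}\bigl(\Pi_\Lambda(\Delta_n) \cap B(\lambda, 1/\ell_0)\bigr) \leq \frac{200K\, e^{\beta n(\alpha/2 + 2)}}{d^n}.
\]
Taking logarithms, dividing by $n(D^*+\beta)$, and using the elementary asymptotics $1-\eta = 1 - (1 - e^{-\beta n \alpha/4})^{1/4} \sim \tfrac14 e^{-\beta n \alpha/4}$, which give $\log\rho/n \to -\beta - \beta\alpha/4$, substitution of $R = \rho/(4|g_n'(\lambda)|)$ yields the announced lower bound on $\liminf_n \tfrac1n \log\|\partial f^n/\partial\lambda(\lambda, a(\lambda))\|$. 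The one genuinely delicate step is excluding the case $R \geq 1/\ell_0$; once it is handled by contradiction with the density lower bound, the Koebe step and the logarithmic bookkeeping are routine, with the $-\beta - \beta\alpha/4$ correction coming from $\log\rho$ and the $-\beta\alpha/2 - 2\beta$ correction in the numerator of the main fraction from the exponent on $e$ in Lemma~\ref{mass_of_disk}.
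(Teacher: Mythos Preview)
Your proof is correct and follows essentially the same route as the paper: parametrize the good island as a graph over the base, apply Koebe's $\tfrac14$-theorem to produce a round disk around $\lambda$ inside $\Pi_\Lambda(\Delta_n)$, and then play the mass upper bound of Lemma~\ref{mass_of_disk} against the pointwise density lower bound coming from $\lambda\in W$. The only noteworthy difference is in how you guarantee $R<1/\ell_0$: the paper fixes this \emph{a priori} by choosing the area threshold $s(\varepsilon)=\pi/\ell_0^2$ for good islands (so that $\Pi_\Lambda(\Delta_n)$ has area $\le \pi/\ell_0^2$ and hence any inscribed disk has radius $\le 1/\ell_0$), whereas you argue by contradiction, observing that $R\ge 1/\ell_0$ for arbitrarily large $n$ would drive $\mu_{f,a}(B(\lambda,1/\ell_0))$ to zero via Lemma~\ref{mass_of_disk}. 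Your variant actually shows that the specific value of $s(\varepsilon)$ is immaterial for this lemma; either way the remainder of the argument is identical.
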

\begin{proof}
	Let $r(\lambda)$ be the largest $r\geq 0$ such that $B(\lambda,r)\subset \Pi_{\Lambda}(\Delta_n)$. We now pick $s(\varepsilon):=\frac{\pi}{\ell_0^2}$ ($\ell_0$ only depends on $\varepsilon$). So, by definition, this means that $r(\lambda)\leq \frac{1}{\ell_0}$. 
	
	Since $\Pi_{\P^1}(f^n(\lambda,a(\lambda)))\in C^\eta$, there exists a disk $D_0$ of radius $(\eta'-\eta)e^{-\beta n}$ centered at  $\Pi_{\P^1}(f^n(\lambda,a(\lambda)))$ and contained in $C^{\eta'}$. The holomorphic map 
	\[h:\lambda \mapsto \Pi_{\P^1}(f^n(\lambda,a(\lambda)))\]
	is injective on $\Pi_{\Lambda}(\Delta_n)$ ($\lambda\mapsto (\lambda,a(\lambda))$ is injective, $f^n$ is injective on $\Gamma_a$ and $\Pi_{\P^1}$ is injective on $D$ since $D$ is a graph). Koebe $\frac{1}{4}$-Theorem implies that $h^{-1}(D_0)$ contains a disks of center $\lambda$ and radius 
	\[\frac{\left|(h^{-1})'\left( \Pi_{\P^1}\left(f^n\left(\lambda,a\left(\lambda\right)\right)\right) \right) \right|(\eta'-\eta)e^{-\beta n}}{4} \geq \frac{\left|(h^{-1})'\left( \Pi_{\P^1}\left(f^n\left(\lambda,a\left(\lambda\right)\right)\right) \right) \right|e^\frac{-\beta  \alpha n}{4} e^{-\beta n}}{32}.  \]
	By definition of $r(\lambda)$, we have
	\[r(\lambda)\geq \frac{\left|(h^{-1})'\left( \Pi_{\P^1}(f^n(\lambda,a(\lambda))) \right) \right|e^\frac{-\beta  \alpha n}{4} e^{-\beta n}}{32}= \frac{e^\frac{-\beta  \alpha n}{4} e^{-\beta n}}{32\left|h'(\lambda)\right|} .\]
	So, by Lemma~\ref{mass_of_disk} and the definition of $W$:
	\begin{align*}
	\frac{e^\frac{-\beta  \alpha n}{4} e^{-\beta n}}{32\left|h'(\lambda)\right|}&\leq r(\lambda) \leq \left(\mu_{f,a}(B(\lambda,r(\lambda))\right)^{\frac{1}{D^*+\beta}} \\
	&\leq  \left( \int \left(1_{B(\lambda, \frac{1}{\ell_0})}\circ \Pi_{\Lambda} \right)  \widehat{T}\wedge [\Delta_n] \right)^{\frac{1}{D^*+\beta}}  \leq \left( \frac{200K e^{\frac{\beta n\alpha}{2}}  e^{2\beta n} }{d^n} \right)^{\frac{1}{D^*+\beta}}.
	\end{align*}
	In other words
	\begin{equation}\label{tout_ca_pour_ca}
	\frac{e^\frac{-\beta  \alpha n}{4} e^{-\beta n}}{32} \left( \frac{d^n}{200K e^{\frac{\beta n\alpha}{2}} e^{2\beta n} } \right)^{\frac{1}{D^*+\beta}} \leq \left|h'(\lambda)\right|.
	\end{equation}
	By the chain rule 
	\[ |h'(\lambda)|= \left\|D\Pi_{\P^1}(f^n(\lambda,a(\lambda)))\circ \frac{\partial f^n}{\partial \lambda}(\lambda,a(\lambda))\right\| \leq \left\| \frac{\partial f^n}{\partial \lambda}(\lambda,a(\lambda))\right\|\]
	since projection are $1$-Lipschitz. Then, taking the logarithm in \eqref{tout_ca_pour_ca}, dividing by $n$ and letting $n\to \infty$ gives
	\[\liminf_{n\to \infty} \frac{1}{n} \log  \left\| \frac{\partial f^n}{\partial \lambda}(\lambda,a(\lambda))\right\| \geq \frac{\log d-\frac{\beta\alpha}{2} -2\beta }{D^*+\beta} -\frac{\beta \alpha}{4}-\beta,\]
	as required.
\end{proof}
\par\noindent Now, the proof of Theorem~\ref{parametric_lyapunov} is complete by taking $\beta \to 0$ and $\varepsilon \to 0$ in Lemmas~\ref{size_good_parameters} and \ref{plusdidee}.

\section{The proof of the Main Theorem}

\subsection{Comparing parameter and dynamical growth}
Here, we prove the following, relying on ideas of \cite{Astorg, AGMV}.
\begin{proposition}\label{prop:transfer}
Let $f:\Lambda\times\P^1\to\Lambda\times\P^1$ be an analytic family of degree $d$ rational maps. Assume that, for some parameter $\lambda_0$, there exists $\alpha>0$ such that
\[\liminf_{n\to\infty}\frac{1}{n}\log\left\|\frac{\partial f^n}{\partial\lambda}(\lambda_0,a(\lambda_0)) \right\| \geq \alpha>0.\]
Assume in addition that $f_{\lambda_0}^k(a(\lambda_0))\notin\mathrm{Crit}(f_{\lambda_0})$ for all $k\geq0$. Then we have
\[\liminf_{n\to\infty}\frac{1}{n}\log (f_{\lambda_0}^n)^\#(a(\lambda_0)) \geq\alpha.\]
\end{proposition}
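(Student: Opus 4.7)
The plan, following the transversality ideas of \cite{Astorg, AGMV}, is to derive an explicit chain-rule identity relating the parameter derivative to the dynamical derivatives, and then to rule out the bad behavior of these derivatives by a contradiction argument. Write $z_0 := a(\lambda_0)$, $z_k := f_{\lambda_0}^k(z_0)$, and $\pi_j := (f_{\lambda_0}^j)'(z_0)$ in local charts at $z_0$ and $z_j$. Setting $\psi_n(\lambda) := f_\lambda^n(z_0)$ (as a function of $\lambda$ with $z_0$ held fixed) and iterating the recursion $\psi_n(\lambda) = f_\lambda(\psi_{n-1}(\lambda))$, differentiation at $\lambda_0$ yields, by induction and the chain rule,
\[
\psi_n'(\lambda_0) \;=\; \pi_n \sum_{k=0}^{n-1}\frac{c_k}{\pi_{k+1}},\qquad c_k := \partial_\lambda f_\lambda(z_k)|_{\lambda=\lambda_0}.
\]
The $\pi_{k+1}$ are nonzero because the orbit $\{z_k\}$ avoids $\mathrm{Crit}(f_{\lambda_0})$, and $|c_k|\le C$ uniformly for some $C>0$ since $(\lambda,z)\mapsto \partial_\lambda f_\lambda(z)$ is continuous on $\overline{\Lambda_0}\times\P^1$ for any relatively compact neighborhood $\Lambda_0$ of $\lambda_0$. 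This delivers the key estimate
\[|\psi_n'(\lambda_0)| \;\le\; C\,|\pi_n|\,\sum_{k=1}^n \frac{1}{|\pi_k|}.\]

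Setting $\rho := \liminf_n \tfrac{1}{n}\log|\pi_n|$, it suffices to prove $\rho\ge\alpha$. I would argue by contradiction: assume $\rho<\alpha$ and pick $0<\varepsilon<\tfrac{1}{3}\min(\alpha,\,\alpha-\rho)$. By definition of $\liminf$, there is a subsequence $n_j\to\infty$ with $|\pi_{n_j}|\le e^{(\rho+\varepsilon)n_j}$, while the hypothesis gives $|\psi_{n_j}'(\lambda_0)|\ge e^{(\alpha-\varepsilon)n_j}$ for $j$ large. Plugging into the key estimate and keeping only the largest term of the sum produces an index $k^*=k^*(n_j)\in\{1,\dots,n_j\}$ with
\[|\pi_{k^*}| \;\le\; C\,n_j\,e^{-(\alpha-\rho-2\varepsilon)n_j}.\]

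The main difficulty is to rule out such an exponentially small $|\pi_{k^*}|$. If $k^*$ stays bounded along a subsequence, $|\pi_{k^*}|$ is bounded below by a positive constant (since the orbit avoids $\mathrm{Crit}(f_{\lambda_0})$ and is a product of finitely many nonzero derivatives), contradicting exponential decay. Otherwise $k^*\to\infty$; then $|\pi_{k^*}|\ge e^{(\rho-\varepsilon)k^*}$ by the definition of $\rho$, and combining with the upper bound together with the trivial inequality $k^*\le n_j$ reduces, after elementary rearrangement, to $(\alpha-3\varepsilon)n_j \le \log(C n_j)$, impossible for large $n_j$ since $\alpha-3\varepsilon>0$. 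Hence $\rho\ge\alpha$. Finally, the spherical derivative $(f_{\lambda_0}^n)^\#(z_0)$ and the local-chart quantity $|\pi_n|$ differ only by a positive conformal factor bounded above and below uniformly on the compact $\P^1$, so taking $\liminf\tfrac{1}{n}\log$ yields the announced inequality.
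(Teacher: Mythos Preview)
Your approach is the paper's: derive the chain-rule identity relating the parameter derivative to $(f^n)'$ via the sum $\sum c_k/\pi_{k+1}$ (this is exactly Lemma~4.4 of \cite{AGMV}, which the paper quotes), then bound and argue by contradiction. There is one genuine gap and one point of sloppiness.

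The gap is the case $\rho=-\infty$, which you never exclude. Your contradiction needs the lower bound $|\pi_{k^*}|\ge e^{(\rho-\varepsilon)k^*}$ coming from the definition of the $\liminf$, but if $\rho=-\infty$ this is vacuous and no inequality of the form $(\alpha-3\varepsilon)n_j\le\log(Cn_j)$ follows; indeed already your setup ``$|\pi_{n_j}|\le e^{(\rho+\varepsilon)n_j}$'' makes no sense. The paper rules this out first by a separate short argument: if $(f^{n_0})^\#\le e^{-Mn_0}$ at the \emph{first} index $n_0$ where this happens, then the earlier terms in the sum are bounded by $e^{kM}$ and the geometric series gives $\|\partial_\lambda a_{n_0}\|\le 2C_1 e^M/(e^M-1)$, contradicting $\|\partial_\lambda a_{n_0}\|\ge e^{n_0\alpha/2}$. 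After that, the paper proceeds in two further steps (first $\gamma>0$, then $\gamma\ge\alpha$ by noting that $\sum_k 1/(f^k)^\#$ converges); your single contradiction does work once $\rho>-\infty$ is known, although the precise inequality you state, $(\alpha-3\varepsilon)n_j\le\log(Cn_j)$, is only what one gets when $\rho\le\varepsilon$; when $\rho>\varepsilon$ one gets $(\alpha-\rho-2\varepsilon)n_j\le\log(Cn_j)$ instead, which is still a contradiction.

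The sloppiness is the chart claim at the end. In a single affine chart the ratio $|\pi_n|/(f^n)^\#(z_0)$ equals $(1+|z_n|^2)/(1+|z_0|^2)$, which is \emph{not} uniformly bounded since $z_n$ may approach $\infty$; and if instead you change charts at each $z_j$ you must carry the transition factors through the chain rule, which you have not done. The clean fix, used in the paper, is to work with spherical quantities throughout: $(f^n)^\#$ is multiplicative along the orbit and $\|\dot f(\cdot)\|$ in the spherical metric is globally bounded on $\P^1$, so the key estimate becomes
\[
\|\partial_\lambda a_n(\lambda_0)\|\ \le\ C_1\,(f_{\lambda_0}^n)^\#(z_0)\Bigl(1+\sum_{k=1}^{n}\frac{1}{(f_{\lambda_0}^{k})^\#(z_0)}\Bigr)
\]
directly, and no final conversion is needed.
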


For the sake of simplicity, we set $a_n(\lambda):=f_\lambda^n(a(\lambda))$ so that $a_0(\lambda)=a(\lambda)$. We also let $\dot{f}:=\partial_\lambda f(\lambda,\cdot)|_{\lambda=\lambda_0}$ and $\dot{a}:=\partial_\lambda a (\lambda_0)$.
The following is Lemma 4.4 in \cite{AGMV}
\begin{lemma}\label{lm:transfer-derivative}
Pick any parameter $\lambda_0$ and any integer $n\geq1$. As soon as we have that $f_{\lambda_0}'(f_{\lambda_0}^k(a(\lambda_0)))\neq0$ for all $0\leq k\leq n$, the following holds
\[\partial_\lambda a_n(\lambda_0)=(f_{\lambda_0}^n)'(a(\lambda_0))\cdot\left( \partial_\lambda a(\lambda_0)+\sum_{k=0}^{n-1} \frac{\dot{f}(a_k(\lambda_0)) }{ (f_{\lambda_0}^{k+1})'(a(\lambda_0))}\right).\]
\end{lemma}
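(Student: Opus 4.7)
The plan is to prove the formula by induction on $n$, starting from the basic chain-rule identity obtained by differentiating the relation $a_{n+1}(\lambda) = f_\lambda(a_n(\lambda))$ with respect to $\lambda$. Concretely, a direct differentiation yields
\[
\partial_\lambda a_{n+1}(\lambda_0) = \dot{f}(a_n(\lambda_0)) + f_{\lambda_0}'(a_n(\lambda_0))\cdot\partial_\lambda a_n(\lambda_0),
\]
which is a linear recurrence whose coefficients involve precisely the multipliers that appear in the statement. The hypothesis $f_{\lambda_0}'(a_k(\lambda_0))\neq 0$ for $0\leq k\leq n$ guarantees, by the chain rule $(f_{\lambda_0}^{k+1})'(a(\lambda_0))=\prod_{j=0}^{k}f_{\lambda_0}'(a_j(\lambda_0))$, that every denominator $(f_{\lambda_0}^{k+1})'(a(\lambda_0))$ appearing in the claimed formula is nonzero, so the statement is well-posed.

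The base case $n=0$ is immediate: the sum is empty, $(f_{\lambda_0}^0)'(a(\lambda_0))=1$, and $\partial_\lambda a_0(\lambda_0)=\dot{a}$ by definition. For the inductive step, I would substitute the inductive hypothesis into the recurrence and use the chain-rule identity $f_{\lambda_0}'(a_n(\lambda_0))\cdot(f_{\lambda_0}^n)'(a(\lambda_0))=(f_{\lambda_0}^{n+1})'(a(\lambda_0))$ to factor $(f_{\lambda_0}^{n+1})'(a(\lambda_0))$ out, turning the added term $\dot{f}(a_n(\lambda_0))$ into $(f_{\lambda_0}^{n+1})'(a(\lambda_0))\cdot \dot{f}(a_n(\lambda_0))/(f_{\lambda_0}^{n+1})'(a(\lambda_0))$, i.e.\ exactly the new term $k=n$ that extends the sum.

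There is no substantive obstacle here; the argument is a bookkeeping exercise and reduces to carefully keeping track of the chain-rule product. The only point requiring mild care is making sure the non-vanishing hypothesis is used exactly where needed, namely to allow division by $(f_{\lambda_0}^{k+1})'(a(\lambda_0))$ for $0\leq k\leq n-1$, which is implied by the assumed nonvanishing at $a_0(\lambda_0),\dots,a_{n-1}(\lambda_0)$ (the assumption at $a_n(\lambda_0)$ is in fact not needed for this particular statement, but is harmless). Finally, the resulting formula is exactly the one used in \cite{AGMV} and will be the key input for the transfer of parametric derivative estimates to dynamical derivatives in the proof of Proposition~\ref{prop:transfer}, the sum being interpreted as a perturbative correction that will be absorbed by the geometric growth of $(f_{\lambda_0}^{k+1})'(a(\lambda_0))$.
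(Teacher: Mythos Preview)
Your proof is correct. The paper does not actually prove this lemma; it simply quotes it as Lemma~4.4 of \cite{AGMV}, so there is no in-paper argument to compare against. Your induction via the chain-rule recurrence $\partial_\lambda a_{n+1}(\lambda_0)=\dot{f}(a_n(\lambda_0))+f_{\lambda_0}'(a_n(\lambda_0))\,\partial_\lambda a_n(\lambda_0)$ is the standard derivation and is exactly what underlies the cited reference.
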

\begin{proof}[Proof of Proposition~\ref{prop:transfer}] 
Since the coordinate on $\Lambda$ is a local coordinate and the metric on $\P^1$ is the one induced by the spherical distance:  
\begin{align*}
\|\partial_\lambda a_n(\lambda_0) \| &\stackrel{\forall n}{=}\frac{|\partial_\lambda a_n(\lambda_0)|}{1+|a_n(\lambda_0)|^2} \quad \
\|\dot{f}(a_k(\lambda_0)) \|\stackrel{\forall k}{=}\frac{|\dot{f}(a_k(\lambda_0))| }{1+|a_{k+1}(\lambda_0)|^2} \\
(f_{\lambda_0}^k)^\#(a(\lambda_0)) &=\frac{|(f_{\lambda_0}^k)'(a(\lambda_0))| (1+|a(\lambda_0)|^2) }{1+|a_k(\lambda_0)|^2}.
\end{align*}
Note that $\|\dot{f}(.)\|$ is continuous on $\mathbb{P}^1(\C)$ which is compact, so there exists $C_1\geq1$ such that $\|\dot{f}(z)\|\leq C_1$ for all $z\in\mathbb{P}^1(\C)$. Up to increasing $C_1$, we can assume $\|\partial_\lambda a(\lambda_0)\|\leq C_1$ as well. So, Lemma~\ref{lm:transfer-derivative} implies:
\begin{align*}
\frac{|\partial_\lambda a_n(\lambda_0)|}{1+|a_n(\lambda_0)|^2}&\leq 
\frac{|(f_{\lambda_0}^n)'(a(\lambda_0))| (1+|a(\lambda_0)|^2) }{1+|a_n(\lambda_0)|^2} \times \\
 &\ \left( \frac{|\partial_\lambda a(\lambda_0)|}{1+|a(\lambda_0)|^2}    + \sum_{k=0}^{n-1} \frac{|\dot{f}(a_k(\lambda_0))| (1+|a_{k+1}(\lambda_0)|^2)  }{ |(f_{\lambda_0}^{k+1})'(a(\lambda_0))|(1+|a_{k+1}(\lambda_0)|^2)(1+|a(\lambda_0)|^2)  }                   \right)
\end{align*}
Hence
\begin{align}\label{majoration_para_dyna}
\|\partial_\lambda a_n(\lambda_0) \|\leq 
(f_{\lambda_0}^n)^\#(a(\lambda_0)) \left( \|\partial_\lambda a(\lambda_0)\| + \sum_{k=0}^{n-1} \frac{\|\dot{f}(a_k(\lambda_0))\|}{ (f_{\lambda_0}^{k+1})^\#(a(\lambda_0)) }                   \right) \nonumber \\
\leq C_1(f_{\lambda_0}^n)^\#(a(\lambda_0)) \left(1+ \sum_{k=0}^{n-1} \frac{1}{ (f_{\lambda_0}^{k+1})^\#(a(\lambda_0)) }                   \right).
\end{align}
We first prove $\gamma:=\liminf_{n\to\infty}\frac{1}{n}\log (f^n_{\lambda_0})^\#(a(\lambda_0)) >-\infty$ by contradiction. If not, take $M\gg 1$ and let $n_0$ be the first integer such that $(f^{n_0}_{\lambda_0})^\#(a(\lambda_0)) \leq e^{-n_0 M}$. Taking $M$ larger will only increase $n_0$ so, by hypothesis, we can assume $\|\partial_\lambda a_{n_0}(\lambda_0)\|\geq \exp( n_0 \alpha/2)$.  Then, \eqref{majoration_para_dyna} gives:
\begin{align*}
 e^{n_0 \alpha/2}\leq \|\partial_\lambda a_{n_0}(\lambda_0)\| & \leq C_1\cdot  (f_{\lambda_0}^{n_0})^\#(a(\lambda_0))\cdot\left(1+\sum_{k=0}^{n_0-1}\frac{1}{(f_{\lambda_0}^{k+1})^\#(a(\lambda_0))}\right)\\
& \leq C_1 e^{ -n_0 M} \left(1+\sum_{k=1}^{n_0} e^{ k M}\right) \leq \frac{2C_1 e^M }{e^M-1}  
\end{align*}
which is impossible, so $\gamma>-\infty$.

We now prove similarly that $\gamma>0$. Assume by contradiction that $\gamma\leq0$ and fix $0<\varepsilon<\alpha/3$ and let $n_0\geq1$ be such that $\frac{1}{n}\log (f^n_{\lambda_0})^\#(a(\lambda_0))\geq \gamma-\varepsilon$ for all $n\geq n_0$. Set
\[C_2:=\max\left\{1,\max_{k\leq n_0}\frac{1}{ (f_{\lambda_0}^{k+1})^\#(a(\lambda_0))}\right\}<+\infty.\]
Taking $n_1\geq n_0$ large enough, we can assume that for all $n\geq n_1$,
\begin{enumerate}
\item $\frac{1}{n}\log\|\partial_\lambda a_n(\lambda_0)\|\geq \alpha-\varepsilon$, and
\item $\frac{1}{n}\log\left(3n C_1C_2/(\exp(-\gamma+\varepsilon)-1)\right)\leq\varepsilon$.
\end{enumerate}
We apply again \eqref{majoration_para_dyna}: for all $n\geq n_1$, we have
\begin{align*}
\|\partial_\lambda a_n(\lambda_0)\| & \leq C_1\cdot (f_{\lambda_0}^n)^\#(a(\lambda_0))\cdot\left(1+n_0C_2+\sum_{k=n_0+1}^{n-1}\exp((k+1)(-\gamma+\varepsilon))\right)\\
& \leq (f_{\lambda_0}^n)^\#(a(\lambda_0))\cdot \left(3nC_1C_2\right)\cdot\left(\frac{\exp((n+1)(-\gamma+\varepsilon))}{\exp(-\gamma+\varepsilon)-1}\right).
\end{align*}
By the choice of $n_1$, for all $n\geq n_1$ this gives
\begin{align*}
\alpha-\varepsilon & \leq \frac{1}{n}\log (f_{\lambda_0}^n)^\#(a(\lambda_0))+ \frac{1}{n}\log\left(\frac{3nC_1C_2}{\exp(-\gamma+\varepsilon)-1}\right)+\frac{n+1}{n}(-\gamma+\varepsilon)\\
& \leq \frac{1}{n}\log (f_{\lambda_0}^n)^\#(a(\lambda_0))+ \frac{n+1}{n}(-\gamma+2\varepsilon).
\end{align*}
Taking the $\liminf$ as $n\to\infty$ yields $\alpha-\varepsilon \leq \gamma-\gamma+2\varepsilon$, whence $\alpha\leq 3\varepsilon$. This is a contradiction. We thus have proved that $\gamma>0$.

\medskip

To conclude, we have to prove $\gamma\geq\alpha$. Using again \eqref{majoration_para_dyna}, we have
\[\varepsilon_n:=\frac{1}{n}\log\|\partial_\lambda a_n(\lambda_0)\|-\frac{1}{n}\log(f_{\lambda_0}^n)^\#(a(\lambda_0))\leq \frac{1}{n}\log C_1 \left| 1+\sum_{k=0}^{n-1} \frac{1}{(f_{\lambda_0}^{k+1})^\#(a(\lambda_0))}\right|.\]
Now, $\limsup_n \varepsilon_n\leq0$ since, as $\gamma>0$, the series $\sum_{k=0}^{+\infty} \frac{1}{(f_{\lambda_0}^k)^\#(a(\lambda_0))}$ is absolutely convergent.
\end{proof}

\subsection{Proof of the Main Theorem}

The case when $\dim\Lambda=1$ is just the combination of Theorem~\ref{parametric_lyapunov} and Proposition~\ref{prop:transfer}.

\medskip

We now assume $\dim\Lambda>1$. Let $\iota:\Lambda\hookrightarrow\P^N$ be an embedding of $\Lambda$ into a complex projective space, let $k:=\dim \Lambda<N$ and let $X$ be the intersection of the closure $\bar{\Lambda}$ be the closure of $\Lambda$ in $\P^N$ with the hyperplane at infinity $H_\infty:=\{Z_{N}=0\}$ in a given system of homogeneous coordinates $[Z_0:\cdots:Z_N]$ on $\P^N$. Let $Y$ be a linear subspace of $H_\infty$ of dimension $N-k$ so that $Y\cap X$ is a finite subspace and let $\mathcal{W}$ be the collection of all linear subspaces of $\P^N$ of dimension $N-k+1$ which intersect $H_\infty$ along $Y$. For any $W\in\mathcal{W}$, let
\[\Lambda_W:=\Lambda\cap W.\]
The variety $\Lambda_W$ is a quasi-projective curve. Let $f_W$ be the restriction of the family $f$ to a family parametrized by $\Lambda_W$ and let $\mu_W$ be the slice of $T_{f,a}$ along $\Lambda_W$, i.e. $\mu_W=T_{f,a}\wedge[\Lambda_W]$. According to Theorem~\ref{parametric_lyapunov}, for any $W$, and for $\mu_W$-almost every $\lambda\in W$, we have
\[\liminf_{n\to\infty} \frac{1}{n} \log \left\|\frac{\partial f^n}{\partial\lambda}(\lambda,a(\lambda)) \right\|\geq \frac{\log d}{2} .\] 
By hypothesis, the set of parameters $\lambda\in\Lambda$ such that there exists $k\geq0$ with $f_\lambda^k(a(\lambda))\in\mathrm{Crit}(f_\lambda)$ is a pluripolar subset of $\Lambda$. In particular, for Lebesgue almost every $W$, it intersects $W$ along a pluripolar set. As $\mu_W$ has continuous potentials, it does not give mass to pluripolar sets and Proposition \ref{prop:transfer} implies that
\[\liminf_{n\to\infty} \frac{1}{n} \log (f^n_\lambda)^\#(a(\lambda)) \geq\frac{\log d}{2},\] 
for $\mu_W$-almost every $\lambda \in W$, and for almost every $W$. The conclusion follows by Fubini Theorem.

\subsection{Sharpness of the bound}
Finally, we prove that the bound from below of the Main Theorem is sharp in the following simple situation. Take a constant family
\[\begin{cases}
f:\P^1 \times \P^1 &\to \P^1 \times \P^1 \\
        \quad \quad (\lambda,z)        &\mapsto (\lambda, f_0(z))
\end{cases} \]
where $f_0$ is a Lattès map of degree $d$ and take $a:\Lambda\to \Lambda$ be the marked point defined by $a(\lambda)=\lambda$. Then, one has $\mu_{\bif, a}=\mu_{f_0}$ where $\mu_{f_0}$ is the maximal entropy measure of $f_0$. It is well known that $\mu_{f_0}$ is absolutely continuous with respect to the Lebesgue measure (so $D_U^*=2$ for any non empty open set $U\subset \Lambda=\P^1$) and its Lyapunov exponent is $\log d/2$ (\cite{Zdunik}). This means, in particular, that Theorem\ref{tm:aeCE} is sharp here, since for  $\mu_{f_0}$-a.e. $\lambda$ in $U$
\[\limsup \frac{1}{n}\log (f_\lambda^n)^\#(a(\lambda))\leq  \frac{1}{D_U^*}\log d.\]

\end{document}